\theoremstyle{plain}
\newtheorem{problem}{Problem}
\newtheorem{theorem}{Theorem}
\newtheorem{lemma}{Lemma}
\newtheorem{corollary}{Corollary}
\numberwithin{conjecture}{section}
\newtheorem*{conjecture*}{Conjecture}
\theoremstyle{definition}
\theoremstyle{remark}
\theoremstyle{plain}
\theoremstyle{plain}
\theoremstyle{plain}
\title{On splitting digraphs}
\author{Donglei Yang\unskip\textsuperscript{a,}\thanks{\emph{E-mail address:} dlyang120@163.com.}, Yandong Bai\unskip\textsuperscript{b,}\thanks{\emph{E-mail address:} bai@nwpu.edu.cn.}, Guanghui Wang\unskip\textsuperscript{a,}\thanks{Corresponding author. \emph{E-mail address}: ghwang@sdu.edu.cn}, Jianliang Wu\unskip\textsuperscript{a,}\thanks{\emph{E-mail address:} jlwu@sdu.edu.cn.}
\\
{\small \textsuperscript{a }\unskip School of Mathematics,}\\
{\small Shandong University, Jinan, 250100, P. R. China}\\
{\small \textsuperscript{b }\unskip  Department of Applied Mathematics,}\\
{\small Northwestern Polytechnical University, Xi'an, 710072, P. R. China}\\
}
\date{}
\begin{document}
\baselineskip 0.65cm

\maketitle
\begin{abstract}
In 1995, Stiebitz asked the following question: For any positive integers $s,t$, is there a finite integer $f(s,t)$ such that every digraph $D$ with minimum out-degree at least $f(s,t)$ admits a bipartition $(A, B)$ such that $A$ induces a subdigraph with minimum out-degree at least $s$ and $B$ induces a subdigraph with minimum out-degree at least $t$? We give an affirmative answer for tournaments, multipartite tournaments, and digraphs with bounded maximum in-degrees. In particular, we show that for every $\epsilon$ with $0<\epsilon<1/2$, there exists an integer $\delta_0$ such that every tournament with minimum out-degree at least $\delta_0$ admits a bisection $(A, B)$, so that each vertex has at least $(1/2-\epsilon)$ of its out-neighbors in $A$, and in $B$ as well.
\end{abstract}
\noindent {\bf Keywords:} Bipartitions of digraphs; Tournaments; Weighted Lov\'{a}sz Local Lemma;  \\
\section{Introduction}


Partitioning an undirected graph (a digraph) into two parts under certain constraints (e.g., see \cite{kuhn} for connectivity constraint, see \cite{stie2} for chromatic number constraint)
has been widely studied due to its important applications in induction arguments.
Among them, partitions under degree constraints have attracted special attention and a number of classical results in undirected graphs have been achieved.
Lov\'{a}sz \cite{lovasz} proved in 1966 that every undirected graph with maximum degree $s+t+1$
can be partitioned into two parts such that they induce two subgraphs with maximum degree at most $s$ and at most $t$, respectively.
Stiebitz \cite{stie} showed in 1996 that every undirected graph with minimum degree $s+t+1$
can be partitioned into two parts such that they induce two subgraphs with minimum degree at least $s$ and at least $t$, respectively.
A natural question is whether or not the corresponding assertions for digraphs hold, where the degree is replaced by out-degree.

For Lov\'{a}sz's result under maximum degree constraint,
Alon \cite{alon} pointed out that its corresponding assertion fails for digraphs in the following strong sense:
For every $k$, there is a digraph without even cycles, in which all out-degrees are exactly $k$. It is trivial to prove that for every bipartition of such a digraph, the maximum out-degree in one of the two parts is $k$. This example was given by Thomassen \cite{even cycle} in 1985.
How about the corresponding assertion for digraphs with respect to Stiebitz's result?
In fact, in 1995, Stiebitz \cite{stie3} proposed the following problem.

\begin{problem}\label{p1}
  For any positive integers $s,t$, is there a finite integer $f(s,t)$ such that every digraph with minimum out-degree at least $f(s,t)$ admits a bipartition $(A, B$), so that $A$ induces a subdigraph with minimum out-degree at least $s$ and $B$ induces a subdigraph with minimum out-degree at least $t$?
\end{problem}
 This problem was also mentioned in \cite{alon}. For general digraphs, the only known value is $f(1,1)=3$ from a result of Thomassen \cite{thom}. Lichiardopol \cite{lichi} proved that every tournament with minimum out-degree at least $t+\frac{s^2+3s+2}{2}$ admits a bipartition $(A, B)$ such that $A$ induces a subdigraph with minimum out-degree at least $s$ and $B$ induces a subdigraph with minimum out-degree at least $t$ .
K\'{e}zdy \cite{andre} constructed an example showing that $f(2,2)>5$. For more results about splitting digraphs, the readers are referred to \cite{hou,lee1}.


Particularly, we ask for a bipartition $(A,B)$ with $A$ and $B$ of fixed sizes. A \emph{bisection} is a bipartition $(A,B)$ with $||A|-|B||\leq1$.
Bollob\'{a}s and Scott \cite{bollo} conjectured that every graph $G$ has a bisection $(A,B)$ with $d_H(v)\geq\frac{d_G(v)-1}{2}$ for each $v\in V(G)$, where $H$ is the subgraph induced by the set of edges between $A$ and $B$. However, Ji et al. \cite{majie} gave an infinite family of counterexamples to this conjecture, which indicates
that $\lfloor\frac{d_G(v)-1}{2}\rfloor$ is probably the correct lower bound. This conjecture is widely open and readers are referred to \cite{bollo,majie,lee2}.

For digraphs, unfortunately, the same example given by Thomassen \cite{even cycle} indicates that we cannot obtain a bipartition $(A, B)$ such that each vertex in one part has at least one out-neighbor in the other part.
So we begin to consider whether or not there exists a bisection of any digraph such that the two subdigraphs induced by the two parts have high minimum out-degree, and we propose the following problem.

\begin{problem}\label{p2}
    For any positive integers $s,t$, is there a finite integer $f(s,t)$ such that every digraph with minimum out-degree at least $f(s,t)$ admits a bisection $(A, B$), so that $A$ induces a subdigraph with minimum out-degree at least $s$ and $B$ induces a subdigraph with minimum out-degree at least $t$?
\end{problem}

In this paper, we give affirmative answers to Problems \ref{p1} and \ref{p2} for some classes of digraphs.
Given a digraph $D$ and a bipartition $(A,B)$ of $V(D)$, let $\Delta^-(D)$ be the maximum in-degree of $D$ and $e(A,B)$ be the number of arcs from $A$ to $B$. Let $D[A]$ and $D[B]$ be the induced subdigraphs of $D$ on $A$ and $B$, respectively. For a vertex $v\in V(D)$, we write $d^+_A(v)$ for the number of out-neighbors of $v$ in $A$, and $d^+_{B}(v)$ for the number of out-neighbors of $v$ in $B$. All digraphs considered here are simple (without loops or multiple arcs).

An $n$-partite tournament with $n\geq 2$, or multipartite tournament, is an orientation of a complete $n$-partite graph, and particularly, a tournament is an orientation of a complete graph. A digraph is \emph{strong} if, for every two vertices $x$ and $y$, there exists an $(x,y)$-path.
As for tournaments, we have the desired result in the following strong sense.
\begin{theorem}\label{th1}
  For every $\epsilon$ with $0<\epsilon<\frac{1}{2}$, there exists an integer $\delta_0$ such that every tournament $T$ with $\delta^+(T)\geq\delta_0$ admits a bisection $(A, B)$ with $\min\{d_A^+(v), d_{B}^+(v)\}\geq (\frac{1}{2}-\epsilon) d_T^+(v)$ for every $v\in V(T)$.
\end{theorem}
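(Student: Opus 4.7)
The plan is to prove Theorem~\ref{th1} by a probabilistic argument using the Weighted Lovász Local Lemma (WLLL). Consider a uniformly random bisection $(A,B)$ of $V(T)$, and for each vertex $v$ introduce the bad event
\[
X_v \;:\; \min\{d_A^+(v),\,d_B^+(v)\} \;<\; \bigl(\tfrac{1}{2}-\epsilon\bigr)\,d_T^+(v).
\]
Since $d_A^+(v)$ is hypergeometrically distributed with mean close to $d_T^+(v)/2$, Hoeffding's inequality yields $\Pr[X_v]\le 2\exp(-c\epsilon^2 d_T^+(v))$ for an absolute constant $c>0$. The event $X_v$ is determined solely by which vertices of $N^+(v)$ fall into $A$; thus $X_v$ is essentially mutually independent of $\{X_u : N^+(u)\cap N^+(v)=\emptyset\}$, with the global bisection constraint $|A|=\lceil n/2\rceil$ acting only as a negative correlation that a lopsided form of the LLL can absorb (an equivalent route is to run the argument in the independent-assignment model with an extra $\epsilon/2$ slack in the definition of $X_v$ and correct to a bisection at the end).

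I would then invoke the WLLL with weights $x_v=\exp(-\alpha\epsilon^2 d_T^+(v))$ for a suitably small constant $\alpha<c$. Writing $u\sim v$ when $N^+(u)\cap N^+(v)\neq\emptyset$ and using the estimate $-\ln(1-x)\le 2x$ for small $x$, the LLL inequality $\Pr[X_v]\le x_v\prod_{u\sim v}(1-x_u)$ reduces to showing
\[
\sum_{u\,\sim\, v}\exp\!\bigl(-\alpha\epsilon^2 d_T^+(u)\bigr)\;\le\;C(\alpha,c)\,\epsilon^2 d_T^+(v)\qquad\text{for every }v\in V(T),
\]
and the threshold $\delta_0=\delta_0(\epsilon)$ is chosen so that this holds uniformly.

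The heart of the argument, and the step I expect to be the main obstacle, is verifying this dependency-sum bound, because the dependency graph of a tournament is extremely dense: any two in-neighbors of a shared vertex $w$ are automatically dependent, and $|N^-(w)|$ can be almost $n$, which is \emph{not} bounded in terms of $\delta_0$. My strategy is to double-count through common out-neighbors,
\[
\sum_{u\,\sim\, v} x_u \;\le\; \sum_{w\in N^+(v)}\sum_{u\in N^-(w)}\exp\!\bigl(-\alpha\epsilon^2 d_T^+(u)\bigr),
\]
and to bucket the inner sum according to dyadic ranges of $d_T^+(u)$. Using the lower bound $d_T^+(u)\ge\delta_0$ to make high-out-degree buckets contribute negligibly, and exploiting the tournament identities $d_T^+(u)+d_T^-(u)=n-1$ and $\sum_u d_T^+(u)=\binom{n}{2}$ to control the number of vertices in the low-out-degree buckets, one aims to close the estimate with $\delta_0$ chosen as an (exponential) function of $1/\epsilon^2$. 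Once the WLLL succeeds, it produces with positive probability a bisection $(A,B)$ that avoids every $X_v$ simultaneously, which is precisely the conclusion of Theorem~\ref{th1}.
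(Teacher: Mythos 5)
Your outline contains the right structural insight but the Local Lemma framework you wrap it in has a genuine gap. In the uniform random bisection model, \emph{no} two events $X_u,X_v$ are independent: the constraint $|A|=\lceil n/2\rceil$ couples every vertex's assignment to every other's, so the dependency graph you propose (based on $N^+(u)\cap N^+(v)\neq\emptyset$) is not a valid dependency graph for the standard LLL, and the assertion that a lopsided LLL ``absorbs'' the coupling is exactly the hard part — verifying the negative-dependency condition for these events under the uniform bisection measure is not routine and is not sketched. Your fallback (independent assignment with $\epsilon/2$ slack, then ``correct to a bisection at the end'') also has a hole: the imbalance is typically $\Theta(\sqrt{n})$, and moving $\Theta(\sqrt{n})$ vertices can wipe out all of $d_A^+(v)$ for a vertex $v$ whose out-degree is only $\delta_0=O_\epsilon(1)$; you would need to argue that the transferred set can be chosen to meet each small out-neighborhood in few vertices, which is an additional argument you have not supplied.

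The more important observation is that the estimate you single out as ``the heart of the argument'' already makes the LLL superfluous. Your dyadic bucketing rests on the tournament fact that the number of vertices of out-degree less than $k$ is at most $2k+1$, so that $\sum_{u\in V(T)}e^{-\alpha\epsilon^2 d_T^+(u)}\le\sum_{i\ge i_0}(2^{i+1}+1)e^{-\alpha\epsilon^2 2^{i-1}}<1$ once $\delta_0$ is large. But the same computation applied directly to $\sum_{v}\Pr[X_v]\le\sum_v 2e^{-c\epsilon^2 d_T^+(v)}$ shows the \emph{expected number of bad vertices} is below $1$, and a first-moment argument finishes the proof with no dependency analysis at all. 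This is precisely what the paper does, after first fixing the model problem above by a random pairing: partition $V(T)$ into $n/2$ pairs arbitrarily, split each pair uniformly and independently, obtaining an exact bisection in which each $d_A^+(v)$ is (up to the forced contribution of fully in- or out-paired neighbors) a sum of independent fair bits, so Chernoff applies cleanly. I would advise you to replace the WLLL scaffolding with this pairing-plus-union-bound argument; your double-counting lemma is then the whole proof rather than a condition to be checked.
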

This result gives affirmative answers to Problems \ref{p1} and \ref{p2} for tournaments.
 A digraph with minimum out-degree $s$ is $s$-\emph{minimal} if any proper subdigraph has minimum out-degree at most $s-1$. It is not hard to prove that any $s$-minimal tournament $T$ with $s>0$ is strong. So we have the following result.

\begin{corollary}
  For every $\epsilon$ with $0<\epsilon<\frac{1}{2}$, there exists an integer $\delta_0$ such that every tournament $T$ with $\delta^+(T)\geq \delta_0$ admits a bipartition $(A, B)$ such that $T[A]$ is strong and $d_A^+(v)\geq (\frac{1}{2}-\epsilon) d_T^+(v)$ for every $v\in A$, $d_B^+(v)\geq (\frac{1}{2}-\epsilon) d_T^+(v)$ for every $v\in B$.
\end{corollary}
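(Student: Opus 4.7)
The plan is to bootstrap the corollary from Theorem~\ref{th1} by performing a light surgery on the bisection it produces. First, invoke Theorem~\ref{th1} with the same $\epsilon$: for a suitable $\delta_0$, every tournament $T$ with $\delta^+(T)\geq \delta_0$ admits a bisection $(A_0, B_0)$ with $\min\{d_{A_0}^+(v), d_{B_0}^+(v)\} \geq (\frac{1}{2}-\epsilon)d_T^+(v)$ for every $v\in V(T)$. I then modify only the $A$-side, shrinking it to enforce strong connectivity while preserving the per-vertex out-degree estimate on both sides.

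Since $T$ is a tournament, $T[A_0]$ is itself a tournament, so its condensation is a transitive tournament on the strong components of $T[A_0]$, and hence possesses a unique sink strong component $A\subseteq A_0$. Put $B := V(T)\setminus A = B_0\cup (A_0\setminus A)$. By construction $T[A]$ is strong, and both $A$ and $B$ are non-empty (the latter because $B\supseteq B_0$).

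It remains to verify the two out-degree conditions. For $v\in A$, the sink property of $A$ inside $T[A_0]$ forces every out-neighbor of $v$ lying in $A_0$ to actually lie in $A$; hence $d_A^+(v)=d_{A_0}^+(v)\geq (\frac{1}{2}-\epsilon)d_T^+(v)$. For $v\in B$, every former out-neighbor of $v$ in $B_0$ still lies in $B\supseteq B_0$, so $d_B^+(v)\geq d_{B_0}^+(v)\geq (\frac{1}{2}-\epsilon)d_T^+(v)$.

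The heavy lifting is done entirely by Theorem~\ref{th1}; the only thing one has to guard against is that the surgery restoring strong connectivity might wreck the degree lower bound. This is precisely what the sink choice buys us: vertices reassigned from $A_0$ to $B$ can only gain out-neighbors there, and vertices kept in $A$ retain every out-neighbor they had in $A_0$. Consequently I expect no real obstacle beyond Theorem~\ref{th1}, and the argument is essentially a one-line structural observation about the condensation of a sub-tournament.
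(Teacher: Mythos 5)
Your proposal is correct and follows essentially the same route as the paper: start from the bisection of Theorem~\ref{th1} and shrink the $A$-side to restore strong connectivity, noting that vertices moved to $B$ only gain out-neighbors there and that the theorem's bound applies to $d_{B_0}^+(v)$ for \emph{every} vertex, including the reassigned ones. The paper phrases the shrinking step as taking a minimal subset $A'\subseteq A_0$ still satisfying the degree condition, whose strongness rests on exactly the sink-strong-component observation you make explicitly, so your version is just a slightly more direct rendering of the same argument.
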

In fact, we can start with a bisection $(A,B)$ from Theorem \ref{th1}, such that $\min\{d_A^+(v), d_{B}^+(v)\}\geq (\frac{1}{2}-\epsilon) d_T^+(v)$ for every $v\in V(T)$. By moving vertices from $A$ to $B$, we have a minimal subset $A'\subset A$ such that $d_{A'}^+(v)\geq (\frac{1}{2}-\epsilon) d_T^+(v)$ for every $v\in A'$. Clearly, $T[A']$ is strong and $d_{B'}^+(v)\geq (\frac{1}{2}-\epsilon) d_T^+(v)$ for every $v\in B'=V(T)\setminus A'$.

By the weighted Lov\'{a}sz Local Lemma \cite{Michael}, we have the following theorem.
\begin{theorem}\label{thm2}
  For every $0<\epsilon<\frac{1}{2}$, there exists an integer $\delta_0$ such that every digraph $D$ with $\delta^+(D)\geq \delta_0$ and $\Delta^-(D)\leq \frac{e^{\epsilon^2(\delta^+(D)-1)}}{8\delta^+(D)}$ admits a bisection $(A, B)$ with $\min\{d_A^+(v), d_{B}^+(v)\}\geq (\frac{1}{2}-\epsilon) d_D^+(v)$ for every $v\in V(D)$.
\end{theorem}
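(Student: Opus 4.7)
The plan is to apply the weighted Lov\'asz Local Lemma (LLL) to a random balanced bipartition of $V(D)$. To obtain a bisection directly, I would first fix an arbitrary pairing of the vertices of $D$ (leaving one unpaired vertex in $A$ by default when $n=|V(D)|$ is odd), and then, independently for each pair, flip a fair coin to decide which of its two vertices goes into $A$ and which into $B$. This construction automatically guarantees $||A|-|B||\leq 1$, and makes the coin flips of different pairs mutually independent, which is what the LLL will exploit.

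For each $v\in V(D)$ I would define the bad event $E_v:=\{\min\{d^+_A(v),d^+_B(v)\}<(\tfrac12-\epsilon)d^+(v)\}$. Writing $k$ for the number of pairs entirely contained in $N^+(v)$ and $m$ for the number meeting $N^+(v)$ in exactly one vertex (so $d^+(v)=2k+m$), one has $d^+_A(v)=k+\sum_{i=1}^m Y_i$ for independent $\mathrm{Bernoulli}(1/2)$ variables $Y_i$. Hoeffding's inequality then yields $\Pr[d^+_A(v)\leq (\tfrac12-\epsilon)d^+(v)]\leq e^{-2\epsilon^2 d^+(v)^2/m}\leq e^{-2\epsilon^2 d^+(v)}$, and symmetrically for $d^+_B(v)$, so $\Pr[E_v]\leq 2e^{-2\epsilon^2 d^+(v)}$. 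Moreover $E_v$ depends only on the coin flips of the at most $d^+(v)$ pairs meeting $N^+(v)$; since each such pair has two vertices with at most $\Delta^-(D)$ in-neighbors each, the LLL-dependency degree is bounded by $|\Gamma(v)|\leq 2d^+(v)\Delta^-(D)$.

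I would then apply the asymmetric LLL with weights $x_v:=2e^{-\epsilon^2 d^+(v)}$. Once $\delta_0$ is large enough that $x_v\leq 1/2$, the standard estimate $1-x\geq e^{-2x}$ and the uniform bound $x_u\leq 2e^{-\epsilon^2\delta^+(D)}$ give
\[
\prod_{u\in\Gamma(v)}(1-x_u)\;\geq\;\exp\Bigl(-2\sum_{u\in\Gamma(v)}x_u\Bigr)\;\geq\;\exp\Bigl(-8\,d^+(v)\Delta^-(D)\,e^{-\epsilon^2\delta^+(D)}\Bigr),
\]
so the LLL inequality $\Pr[E_v]\leq x_v\prod_{u\in\Gamma(v)}(1-x_u)$ reduces to the single uniform condition $8\Delta^-(D)\,e^{-\epsilon^2\delta^+(D)}\leq \epsilon^2$. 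The hypothesis $\Delta^-(D)\leq e^{\epsilon^2(\delta^+(D)-1)}/(8\delta^+(D))$ yields $8\Delta^-(D)e^{-\epsilon^2\delta^+(D)}\leq e^{-\epsilon^2}/\delta^+(D)$, and taking $\delta_0$ of order $1/\epsilon^2$ closes the gap. The LLL will then produce, with positive probability, a bisection avoiding every $E_v$.

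The main technical obstacle will be reconciling the bisection constraint (a global equality on sizes) with the independence structure the LLL requires: a fully independent vertex-by-vertex assignment would violate balance, while sampling a uniformly random balanced subset induces global correlations. The pairing device handles this cleanly, at the cost of only a factor of two in the dependency bound and the slight combinatorial bookkeeping needed to apply Hoeffding around the deterministic offset $k$; the rest is a routine matching between the Chernoff exponent $2\epsilon^2 d^+(v)$ and the hypothesized exponent $\epsilon^2(\delta^+(D)-1)$ appearing in $\Delta^-(D)$.
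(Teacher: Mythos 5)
Your proposal is correct and takes essentially the same route as the paper: the same pairing device to enforce the bisection, the same Chernoff/Hoeffding tail bound of order $e^{-\Theta(\epsilon^2 d^+(v))}$ for each bad event, and the same dependency bound $2d^+(v)\Delta^-(D)$. The only (cosmetic) difference is that the paper invokes the Molloy--Reed weighted Local Lemma with weights $t_v=d^+(v)/\delta^+(D)$, whereas you run the general asymmetric Local Lemma with $x_v=2e^{-\epsilon^2 d^+(v)}$; the resulting condition $8\Delta^-(D)e^{-\epsilon^2\delta^+(D)}\leq\epsilon^2$ matches the hypothesis in the same way.
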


For bipartite tournaments, we also have an affirmative answer to Problem \ref{p1}.
\begin{theorem}\label{thm3}
For any positive integers $s\leq t$, if $D$ is a bipartite tournament with $\delta^+(D)\geq t+\frac{(s+1)^4}{4s}-s$, then $D$ has a bipartition $(A, B)$ with $\delta^+(D[A])\geq s$ and $\delta^+(D[B])\geq t$.
\end{theorem}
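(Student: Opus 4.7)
The plan is to locate a subset $A \subseteq V(D)$ with $\delta^+(D[A]) \geq s$ such that $\max(|A\cap X|, |A\cap Y|) \leq \frac{(s+1)^4}{4s} - s$, where $(X,Y)$ denotes the bipartition of $D$. Setting $B = V(D)\setminus A$, bipartiteness forces every $v \in B$ to send all its arcs across the bipartition, so $d_A^+(v) \leq \max(|A\cap X|, |A\cap Y|)$, and consequently
\[
d_B^+(v) = d_D^+(v) - d_A^+(v) \geq \delta^+(D) - \Bigl(\tfrac{(s+1)^4}{4s} - s\Bigr) \geq t,
\]
which is exactly the required $\delta^+(D[B]) \geq t$.

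To produce $A$, I would take $A$ of minimum cardinality among subsets of $V(D)$ with $\delta^+(D[A]) \geq s$ (such $A$ exists since $A = V(D)$ is valid). Write $a = |A\cap X|$, $b = |A\cap Y|$ with $a\le b$. The minimality of $|A|$ implies that for every $v \in A$ there exists $u \in A$ with $u\to v$ and $d^+_{D[A]}(u) = s$. Letting $S_X, S_Y$ denote the vertices of out-degree exactly $s$ in each part, the bipartite structure translates this into the covering inequalities $a \leq s|S_Y| \leq sb$ and $b \leq s|S_X| \leq sa$, and counting the $ab$ arcs of $D[A]$ against the lower bound $s$ on each out-degree yields $(a-s)(b-s) \geq s^2$. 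Parametrizing $a = s+k$ with $k \in \{1, \dots, s\}$ (the range forced by $|A|$ being minimized subject to these inequalities), one has $b = s + \lceil s^2/k \rceil$ at the minimum, and so $\max(a,b) = b \leq s^2+s$, with the extremal value attained at $k=1$, i.e.\ at $(a,b) = (s+1, s^2+s)$. Since $s^2+s \leq \frac{(s+1)^4}{4s}-s$ (the latter reducing to $s^4-2s^2+4s+1 \geq 0$, which holds for every positive integer $s$), the desired bound follows.

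The main obstacle is to guarantee that the minimum $|A|$ really does satisfy $|A| \leq (s+1)^2$, so that $(a,b)$ lies in the extremal family described. This requires $D$ to actually contain a configuration of that size, and is exactly where the hypothesis $\delta^+(D) \geq \frac{(s+1)^4}{4s}$ enters essentially: one constructs the substructure $(s+1, s^2+s)$ by first choosing $A_X \subseteq X$ of size $s+1$, then selecting $A_Y \subseteq Y$ of size $s(s+1)$ from the vertices of $Y$ having at most one in-neighbor in $A_X$, distributed so that each $x_i \in A_X$ is the unique in-neighbor of at least $s$ of them. The abundance of such $y$'s follows from a counting argument using $\delta^+(D) \geq \frac{(s+1)^4}{4s}$; a careful averaging/pigeonhole step then guarantees a suitable $A_X$. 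This construction is likely the most technical part of the proof, and if it proves unwieldy, an alternative is to replace the minimum-$|A|$ criterion with minimum-$\max(|A\cap X|, |A\cap Y|)$, which directly controls the quantity of interest and gives stronger minimality information on the larger of the two parts.
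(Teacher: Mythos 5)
Your reduction in the first paragraph (find a small $A$ with $\delta^+(D[A])\geq s$, then every $v\in B$ loses at most $\max(|A\cap X|,|A\cap Y|)$ out-neighbors to $A$) is exactly how the paper derives the theorem from its key lemma. The gap is in how you bound the size of $A$. The inequalities you extract from a minimum-cardinality $A$ --- namely $(a-s)(b-s)\geq s^2$, $a\leq sb$, $b\leq sa$ --- are \emph{necessary} conditions satisfied by every set with $\delta^+(D[A])\geq s$; they bound $|A|$ from \emph{below}, not from above. Minimizing $|A|$ over subsets of the particular digraph $D$ does not entitle you to conclude that $(a,b)$ attains (or comes near) the numerical optimum of this constraint system: $D$ simply may not contain any configuration with parameters $(s+1,s^2+s)$ or $(2s,2s)$, in which case the true minimizer could a priori be much larger. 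You acknowledge this yourself, and the construction you sketch to repair it (an $A_X$ of size $s+1$ together with $s(s+1)$ vertices of $Y$ each having at most one in-neighbor in $A_X$, suitably distributed) is precisely the hard part; it is not carried out, and it is not clear that an averaging argument over $(s+1)$-subsets of $X$ delivers the required "private'' out-neighbors in an adversarial bipartite tournament. As it stands the proposal does not prove the theorem.

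The paper avoids the construction entirely by extracting \emph{upper-bound} information from minimality. If $A$ is (vertex-)minimal with $\delta^+(D[A])\geq s$, then every $v\in A$ has an in-neighbor $u\in A$ with $d^+_{D[A]}(u)=s$ (otherwise $v$ could be deleted). Let $a'$ and $b'$ count the vertices of out-degree exactly $s$ in the two parts of $A$, say $a'\geq b'$. The $s(a'+b')$ arcs leaving these vertices must cover all of $A$ as heads, and at most $a'b'$ of them land back inside the special set, giving $s(a'+b')-a'b'\geq |A|-(a'+b')\geq 0$; this forces $b'\leq 2s$, and the covering argument within one part gives $a'\leq sb'$, whence $|A|\leq (s+1)(a'+b')-a'b'\leq \max_{y\leq 2s}\bigl((s+1)^2y-sy^2\bigr)\leq \frac{(s+1)^4}{4s}$. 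Note the counting is done on the out-degree-$s$ vertices, not on $|A\cap X|$ and $|A\cap Y|$ as in your version --- that is what turns the inequality into an upper bound on $|A|$. If you want to salvage your approach, this is the missing idea; your alternative suggestion of minimizing $\max(|A\cap X|,|A\cap Y|)$ instead of $|A|$ runs into the same difficulty, since it again yields only necessary conditions rather than an absolute upper bound.
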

For $k$-partite tournaments with $k>2$, we derive the following result from similar arguments in the proof of Theorem \ref{thm3}, and here we only give a trivial bound.
\begin{corollary}\label{cor3}
  For any positive integers $s\leq t$ and $k>2$, if $D$ is a $k$-partite tournament with $\delta^+(D)\geq t+\max\{2s(s+1)^2, 2ks(s+1)\}$, then $D$ has a bipartition $(A, B)$ with $\delta^+(D[A])\geq s$ and $\delta^+(D[B])\geq t$.
\end{corollary}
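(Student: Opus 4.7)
Following the strategy of Theorem~\ref{thm3}, I fix a $k$-partite tournament $D$ with partite classes $V_1,\dots,V_k$ and $\delta^+(D)\geq t+\max\{2s(s+1)^2,\,2ks(s+1)\}$, and choose a subset $A\subseteq V(D)$ of minimum size with $\delta^+(D[A])\geq s$ (such $A$ exists because $A=V(D)$ is a valid candidate). Set $B=V(D)\setminus A$; the goal is $\delta^+(D[B])\geq t$. Suppose toward a contradiction that some $v\in B$ satisfies $d^+_B(v)\leq t-1$. Then
\[
  d^+_A(v)\ \geq\ \delta^+(D)-(t-1)\ \geq\ \max\{2s(s+1)^2,\,2ks(s+1)\}+1,
\]
so $v$ has a very large out-neighborhood inside $A$.

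The desired contradiction will come from producing a proper subset $A'\subsetneq A$ with $v\in A'$ and $\delta^+(D[A'])\geq s$, which violates the minimality of $|A|$. To build $A'$, I would start from $A\cup\{v\}$ and greedily delete any vertex whose current out-degree in the surviving set drops below $s$, stopping when the process stabilizes. Since $A$ is size-minimal, for every $u\in A$ the induced subdigraph $D[A\setminus\{u\}]$ contains some vertex of out-degree $<s$; this produces a map $u\mapsto w_u\in A$ with $d^+_A(w_u)=s$ and the arc $w_u\to u$. Using this ``critical witness'' map one bounds how many vertices of $A$ the cascade can force out, and then uses the huge out-contribution of $v$ to $A$ to show that $v$ itself survives the pruning, yielding $|A'|\leq |A|-1$.

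The main obstacle is controlling the cascade in the absence of a clean bipartite scaffolding. In Theorem~\ref{thm3} the two partite classes split the critical-witness arcs into two tidy layers and the count becomes $O(s^2)$ per step; for $k>2$ this layering breaks, and my workaround is a pigeonhole step: since $v$ has at least $2ks(s+1)$ out-neighbors in $A$, some partite class $V_i$ contains at least $2s(s+1)$ of them, and the bipartite-style cascade argument can then be applied between $V_i\cap A$ and $(V(D)\setminus V_i)\cap A$, treating $V_i$ as one ``side''. This pigeonhole reduction is the source of the factor $k$ in $2ks(s+1)$, while the term $2s(s+1)^2$ is the direct bipartite-style bound lifted from Theorem~\ref{thm3}; the proof goes through whenever either bound is met, which is why the hypothesis takes the $\max$ of the two. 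The remaining calculations are routine adaptations of the bipartite case, consistent with the authors' note that the bound is only a trivial one.
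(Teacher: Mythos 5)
There is a genuine gap, and it is the central idea of the paper's argument that is missing. The paper does not argue by contradiction from a low-out-degree vertex of $B$ at all: it takes $A$ to be a minimal vertex set with $\delta^+(D[A])\geq s$ and then proves, as a standalone structural fact (Lemma~\ref{lem2}(2)), that any such minimal $k$-partite tournament has fewer than $\max\{2s(s+1)^2,\,2ks(s+1)\}$ vertices. The proof of that lemma is a counting argument: by minimality every vertex has an in-neighbor of out-degree exactly $s$; letting $a_i$ be the number of out-degree-$s$ vertices in class $U_i$ one gets $s\sum_i a_i-\sum_{i<j}a_ia_j\geq n-\sum_i a_i$, and the two terms in the $\max$ arise from the case split $a_1\leq\sum_{i\geq2}a_i$ versus $a_1>\sum_{i\geq2}a_i$ --- not from a pigeonhole over the partite classes of $N^+(v)\cap A$ as you suggest. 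Once $|A|$ is bounded, every $v\in B$ satisfies $d^+_B(v)\geq\delta^+(D)-|A|\geq t$ and the corollary is immediate. Your proposal never establishes any bound on $|A|$, and that bound is the entire content of the result.

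Moreover, the contradiction mechanism you propose does not function. Starting from $A\cup\{v\}$ and ``greedily deleting any vertex whose out-degree drops below $s$'' deletes nothing: every vertex of $A$ already has out-degree $\geq s$ inside $A$ (adding $v$ only helps), and $v$ itself has out-degree well above $s$ into $A$, so the process stabilizes instantly at $A\cup\{v\}$, a set that is \emph{larger} than $A$. No step in your plan ever removes a vertex of $A$, so no violation of minimality can be produced. (There is also the small inconsistency of asking for $A'\subsetneq A$ with $v\in A'$ when $v\notin A$.) The ``critical witness'' map $u\mapsto w_u$ you introduce is indeed the right ingredient --- it is exactly how Lemma~\ref{lem2} begins --- but it must be used to count $|A|$ directly, not to drive a pruning cascade.
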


\section{Proofs of Theorem \ref{th1}, \ref{thm2} and \ref{thm3}}
\begin{proof}[{\bf Proof of Theorem \ref{th1}}]
Let $T$ be such a tournament with $n$ vertices, and assume $n$ is even. We arbitrarily partition the vertices of $T$ into disjoint pairs $\{v_1,w_1\},\{v_2,w_2\},\ldots,\{v_{n/2},w_{n/2}\}$ (we allow a singleton when $n$ is odd and deal with it in the similar method) and separate each pair independently and uniformly, then we have a bisection $(A,  B)$.

For a vertex $v\in A$ (or $B$) in the pair $\{v,w\}$, let $X_v$ be the number of out-neighbors of vertex $v$ in $A$ (or $B$). We say $v$ is \emph{bad} if either $X_v<t:=\lceil(\frac{1}{2}-\epsilon)d_T^+(v)\rceil$ or $X_v>d_T^+(v)-t$ and denote by $X$ the number of bad vertices.
For every $v\in V(T)$, let $a_v=|\{i\in [\frac{n}{2}] : \{v_i,w_i\}\subseteq N^+(v)\}|$ and $b_v=|\{i\in [\frac{n}{2}] : |N^+(v)\cap \{v_i,w_i\}|=1\}|$. Thus we have $d^+_T(v)=2a_v+b_v$ and $Pr(X_v<t)=Pr(X_v>d^+_T(v)-t)=0$ when $a_v\geq t$. Consider $a_v<t$, we have
$$Pr(X_v<t)=\left\{\begin{array}{cc}
                                            \sum\limits^{t-1-a_v}_{i=0}
                                            \binom {b_v-1} i
                                            \left(\frac{1}{2}\right)^{b_v-1} & w\in N^+(v), \\
                                            \sum\limits^{t-1-a_v}_{i=0}
                                            \binom {b_v} i
                                            \left(\frac{1}{2}\right)^{b_v}
                                             & w\in N^-(v).
                                          \end{array}\right.
$$
Similarly, we have
$$Pr(X_v>d^+_T(v)-t)=\left\{\begin{array}{cc}
                                            \sum\limits^{t-2-a_v}_{i=0}
                                            \binom {b_v-1} i
                                            \left(\frac{1}{2}\right)^{b_v-1} & w\in N^+(v), \\
                                            \sum\limits^{t-1-a_v}_{i=0}
                                            \binom {b_v} i
                                            \left(\frac{1}{2}\right)^{b_v}
                                             & w\in N^-(v).
                                          \end{array}\right.$$
If $a_v<t$, then $\binom {b_v} i\left(\frac{1}{2}\right)^{b_v}\leq\binom {b_v-1} i\left(\frac{1}{2}\right)^{b_v-1}$ for each $i$ with $0\leq i\leq t-1-a_v$, and it follows that$\sum\limits^{t-1-a_v}_{i=0}
                                            \binom {b_v} i
                                            \left(\frac{1}{2}\right)^{b_v}\leq\sum\limits^{t-1-a_v}_{i=0}
                                            \binom {b_v-1} i
                                            \left(\frac{1}{2}\right)^{b_v-1}$. Let $f(a,b)=\sum\limits^{t-1-a}_{i=0}\binom {b-1} i(\frac{1}{2})^{b-1}$, where $a<t$ and $2a+b=d_T^+(v)$. Now we claim that $f(a-1,b+2)>f(a,b)$, in fact,
\begin{align}
f(a-1,b+2)-f(a,b)\nonumber
&=\left(\frac{1}{2}\right)^{b+1}\left(\sum\limits^{t-a}_{i=0}\binom {b+1} i-4\sum\limits^{t-1-a}_{i=0}\binom {b-1} i\right)\\ \nonumber
&=\left(\frac{1}{2}\right)^{b+1}\left(\binom {b+1} {t-a}-2\binom {b-1} {t-a-1}-\binom {b} {t-a-1}\right). \\ \nonumber
&=\left(\frac{1}{2}\right)^{b+1}\left(\frac{(b-1)!(b-2t+2a)}{(b-t+a)!(t-a)!}\right) \\ \nonumber
&>0,\nonumber
\end{align}
where the last inequality follows from the fact $2a+b>2t$ and $a<t$. Thus we have $$Pr(X_v<t)\leq f(0,d_T^+(v))=\sum\limits^{t-1}_{i=0}\binom {d_T^+(v)-1} i\left(\frac{1}{2}\right)^{d^+_T(v)-1}
$$ and $$Pr(X_v>d_T^+(v)-t)\leq\sum\limits^{t-1}_{i=0}\binom {d_T^+(v)-1} i\left(\frac{1}{2}\right)^{d^+_T(v)-1}.$$
Suppose a random variable $Y$ has binomial distribution $\mathbb{B}(N,\frac{1}{2})$, where $N=d_T^+(v)-1$. By Chernoff's inequality, we know that $Pr(Y-E(Y)\geq N\sigma)<e^{-2N\sigma^2}$ for any positive constant $\sigma$. Thus we have \begin{align}
                                                       \sum\limits^{t-1}_{i=0}\binom{d_T^+(v)-1}i\left(\frac{1}{2}\right)^{d^+_T(v)-1}\nonumber
                                                      & =Pr(Y\geq d^+_T(v)-t) \\ \nonumber
                                                      & =Pr(Y-\frac{N}{2}\geq\frac{N}{2}-t+1) \\ \nonumber
                                                       &<e^{-2\left(d_T^+(v)-1\right)\left(\frac{1}{2}-\frac{t-1}{d_T^+(v)-1}\right)^2}\\\nonumber
                                                       &<e^{-\epsilon^2(d_T^+(v)-1)},\nonumber
                                                    \end{align}
where the last inequality follows the fact that $\frac{1}{2}-\frac{t-1}{d_T^+(v)-1}>\frac{1}{2}-\frac{t}{d_T^+(v)}>\frac{\epsilon}{\sqrt{2}}$ when $d_T^+(v)\geq\frac{2+\sqrt{2}}{\epsilon}$.
Now we bound $E(X)$.
By the linearity of expectation, $$E(X)=\sum\limits_{v\in V(T)}\{Pr(X_v>d_T^+(v)-t)+Pr(X_v<t)\}<\sum\limits_{v\in V(T)}2e^{-\epsilon^2(d_T^+(v)-1)}.$$
For every $i\in \mathbb{N}$, the number of vertices $v$ with $2^{i-1}\leq d^+_T(v)<2^i$ in $T$ is at most $2^{i+1}$, and there exists a positive integer $i_0$ such that $e^{-\epsilon^2(2^{i-1}-1)}\leq2^{-2i-2}$ whenever $i\geq i_0$. Let $\delta^+(T)\geq\delta_0:=\max\{2^{i_0-1}, \frac{2+\sqrt{2}}{\epsilon}\}$, we have
\begin{align}
  E(X)\nonumber
  &<\sum\limits_{i\geq i_0}2^{i+2}e^{-\epsilon^2(2^{i-1}-1)}\\ \nonumber
  &\leq\sum\limits_{i\geq i_0}2^{i+2}2^{-2i-2}\leq1.\nonumber
\end{align}
Thus there is a bisection of $T$ with no bad vertices, and we are done.
\end{proof}

From the proof of Theorem \ref{th1}, we derive the following corollary.

\begin{corollary}
Every tournament $T$ with $\delta^+(T)\geq(2+o(1))k$ admits a bisection $(A, B)$ with $\min\{d_A^+(v), d_{B}^+(v)\}\geq k$ for every $v\in V(T)$, where the $o(1)$-term tends to zero as $k$ tends to infinity.
\end{corollary}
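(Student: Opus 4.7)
The plan is to derive the corollary as a direct asymptotic consequence of Theorem~\ref{th1} by letting the accuracy parameter shrink with $k$. To guarantee $\min\{d_A^+(v),d_B^+(v)\}\geq k$ via Theorem~\ref{th1}, it suffices that $(\tfrac{1}{2}-\epsilon)\,d_T^+(v)\geq k$ for every $v$, i.e.\ $\delta^+(T)\geq 2k/(1-2\epsilon)$. At the same time, Theorem~\ref{th1} imposes the threshold $\delta^+(T)\geq \delta_0(\epsilon)$, and inspection of its proof yields $\delta_0(\epsilon)=\max\bigl\{2^{i_0-1},(2+\sqrt{2})/\epsilon\bigr\}$, where $i_0=i_0(\epsilon)$ is the least integer with $e^{-\epsilon^2(2^{i-1}-1)}\leq 2^{-2i-2}$ for every $i\geq i_0$.

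A short estimate (taking logs in the defining inequality for $i_0$) gives $2^{i_0}=O(\epsilon^{-2}\log(1/\epsilon))$, and hence $\delta_0(\epsilon)=O(\epsilon^{-2}\log(1/\epsilon))$. I then pick any $\epsilon=\epsilon(k)$ with $\epsilon(k)\to 0$ and $\epsilon(k)^{-2}\log(1/\epsilon(k))=o(k)$; for concreteness $\epsilon(k)=1/\log k$. With this choice $2k/(1-2\epsilon)=2k(1+O(\epsilon))=(2+o(1))k$, while $\delta_0(\epsilon)=O((\log k)^2\log\log k)=o(k)$. Therefore
\[
\max\bigl\{\delta_0(\epsilon),\,2k/(1-2\epsilon)\bigr\}=(2+o(1))k,
\]
so every tournament $T$ with $\delta^+(T)$ above this bound satisfies both hypotheses of Theorem~\ref{th1} for this $\epsilon$, and Theorem~\ref{th1} then produces the required bisection.

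The only delicate point is the calibration of $\epsilon(k)$: it must tend to $0$ slowly enough that the threshold $\delta_0(\epsilon(k))$ remains negligible compared with $k$, yet fast enough that $1/(1-2\epsilon(k))\to 1$, so that the two separate requirements of Theorem~\ref{th1} collapse into a single asymptotic bound of the form $(2+o(1))k$. Any schedule with $\epsilon(k)\to 0$ and $\epsilon(k)^{-2}\log(1/\epsilon(k))=o(k)$ achieves this, so there is substantial flexibility in the choice.
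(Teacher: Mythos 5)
Your derivation is correct and matches what the paper intends: the paper gives no explicit proof, saying only that the corollary follows "from the proof of Theorem~\ref{th1}," and the point is precisely the calibration you carry out — extracting $\delta_0(\epsilon)=\epsilon^{-O(1)}$ from the proof and letting $\epsilon=\epsilon(k)\to 0$ slowly enough (e.g.\ $\epsilon=1/\log k$) that $\delta_0(\epsilon(k))=o(k)$ while $2k/(1-2\epsilon(k))=(2+o(1))k$. Your quantitative estimates ($2^{i_0}=O(\epsilon^{-2}\log(1/\epsilon))$ and the verification that $(\tfrac12-\epsilon)\cdot 2k/(1-2\epsilon)=k$) check out.
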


\begin{proof}[{\bf Proof of Theorem \ref{thm2}}] First we introduce a well-known lemma.
\begin{lemma}\emph{\textbf{(The Weighted Local Lemma \cite{Michael})}}\label{lem1}
Consider a set $\mathcal{B}=\{A_1,A_2,\ldots,A_n\}$ of events such that each $A_i$ is mutually independent of $\mathcal{B}-(D_i\cup \{A_i\})$ for some $D_i\subset \mathcal{B}$. If we have integers $t_1,t_2,\ldots,t_n\geq1$ and a real number $0\leq p\leq\frac{1}{4}$ such that for each $i\in [n]$,
\begin{enumerate}
  \item[(a)] $Pr(A_i)\leq p^{t_i}$ and
  \item[(b)] $\sum\limits_{A_j\in D_i}(2p)^{t_j}\leq\frac{t_i}{2}$,
\end{enumerate}
then with positive probability, none of the events in $\mathcal{B}$ occur.
\end{lemma}

 The proof of Theorem \ref{thm2} relies on Lemma \ref{lem1}. We arbitrarily partition the vertices of $D$ into disjoint pairs and separate each pair independently and uniformly, then we have a bisection $(V_1, V_2)$. For a vertex $v\in V(D)$, let $x(v)$ be the number of out-neighbors of $v$ that are in the same part with $v$. Let $A(v)$ be the event that either $x(v)<s:=\lceil(\frac{1}{2}-\epsilon)d_D^+(v)\rceil$ or $x(v)>d_D^+(v)-s$, and let $\mathcal{A}=\{A(v): v\in V(D)\}$ be the set of all bad events. By the same argument as in the proof of Theorem \ref{th1}, we have $$Pr(A(v))=Pr(x(v)<s)+Pr(x(v)>d_D^+(v)-s)<2e^{-\epsilon^2(d^+_D(v)-1)}.$$
Let $t_v:=\frac{d_D^+(v)}{\delta^+(D)}$ be the associated weight. Let $p:=e^{-\epsilon^2(\delta^+(D)-1)}$ and $p<\frac{1}{4}$ whenever $\delta^+(D)$ is sufficiently large. In fact, it suffices to have $\delta^+(D)\geq \delta_0:=\min\{\delta: e^{-\epsilon^2(\delta-1)}<\frac{1}{4}, e^{\epsilon^2(\delta-1)}/8\delta\geq\delta\}$.
Now it suffices to check that conditions (a) and (b) hold. The condition (a) holds, since $$Pr(A(v))<e^{-\epsilon^2(d^+_D(v)-1)}\leq e^{-\epsilon^2(\delta^+(D)-1)d_D^+(v)/\delta^+(D)}=p^{t_v}.$$

Let $D(v)$ be the set of events that are relevant to the event $A(v)$. Therefore $A(v)$ is mutually independent of $\mathcal{A}-(D(v)\cup\{A(v)\})$. We observe that $A(v)$ and $A(u)$ are related only if $u, v$ have common out-neighbors or have neighbors in the same pair. From the observation, we have $|D(v)|\leq2d_D^+(v)\Delta^-(D)$. Since $\Delta^-(D)\leq \frac{e^{\epsilon^2(\delta^+(D)-1)}}{8\delta^+(D)}$ and $t_v\geq1$ for every $v\in V(D)$, we have $$\sum\limits_{A(w)\in D(v)}(2p)^{t_w}\leq\sum\limits_{A(w)\in D(v)}2p\leq2e^{-\epsilon^2(\delta^+(D)-1)}|D(v)|\leq\frac{t_v}{2}.$$ Now condition (b) holds, and by Lemma \ref{lem1}, with positive probability, no bad events in $\mathcal{A}$ occur. That is, we have a bisection with $s\leq x(v)\leq d_D^+(v)-s$ for every $v\in V(D)$. This observation completes the proof.\\
\end{proof}
\begin{proof}[{\bf Proofs of Theorem \ref{thm3} and Corollary \ref{cor3}}]
 Recall that a digraph with minimum out-degree $s$ is $s$-\emph{minimal} if any proper subdigraph has minimum out-degree at most $s-1$. We have the following rough characterization of minimal bipartite and $k$-partite tournaments ($k>2$).
\begin{lemma}\label{lem2}
\emph{(1)} Every $s$-minimal bipartite tournament $D$ satisfies $|V(D)|\leq \frac{(s+1)^4}{4s}$.\\
\emph{(2)} Every $s$-minimal $k$-partite tournament $D$ satisfies $|V(D)|<\max\{2s(s+1)^2, 2ks(s+1)\}$
\end{lemma}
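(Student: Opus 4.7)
The plan is to combine a witnessing argument with a double-count of arcs between the two parts of the min-out-degree set, followed by a one-variable optimization.

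\medskip
\noindent\textbf{Part (1).} Let $D$ be $s$-minimal with parts $X,Y$, put $A=\{v\in V(D):d_D^+(v)=s\}$, $A_X=A\cap X$, $A_Y=A\cap Y$, and write $a=|A_X|$, $b=|A_Y|$, $x=|X|$, $y=|Y|$. The witnessing step is a standard consequence of $s$-minimality applied to a single-vertex deletion: removing any $v\in V(D)$ must drop some vertex below out-degree $s$, forcing the existence of $w\in A$ with $w\to v$; in the bipartite setting $w$ lies in the part opposite $v$, giving $Y\subseteq N^+(A_X)$ and $X\subseteq N^+(A_Y)$, hence $y\leq sa$ and $x\leq sb$.

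Next I would double-count the $ab$ arcs spanning $A_X$ and $A_Y$. Every vertex of $Y\setminus A_Y$ still needs a witness in $A_X$, which absorbs at least $y-b$ of the $sa$ out-arcs of $A_X$; what remains is at most $sa-(y-b)$ arcs of $A_X\to A_Y$. The symmetric bound $sb-(x-a)$ applies to arcs $A_Y\to A_X$, and summing gives
\[
ab\leq(s+1)(a+b)-(x+y),
\]
which rearranges to $|V(D)|\leq(s+1)T-ab$ with $T=a+b$. From $a\leq x\leq sb$ one deduces $a\leq sT/(s+1)$, and symmetrically for $b$, so $a,b\in[T/(s+1),\,sT/(s+1)]$. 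Since $z(T-z)$ is concave, $ab$ is minimized on this interval at either endpoint, yielding $ab\geq sT^2/(s+1)^2$. Plugging in,
\[
|V(D)|\leq(s+1)T-\frac{sT^2}{(s+1)^2},
\]
which is concave in $T$; elementary calculus maximizes the right-hand side at $T^{\ast}=(s+1)^3/(2s)$, with maximum value $(s+1)^4/(4s)$.

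\medskip
\noindent\textbf{Part (2).} For the $k$-partite case, put $A_i=A\cap V_i$ and $n=|V(D)|$. Summing the analogous witnessing inequalities over the parts still gives $n\leq s|A|$, so the remaining task is to bound $|A|$. The form $\max\{2s(s+1)^2,\,2ks(s+1)\}$ of the claimed bound suggests a dichotomy: if some $|A_i|$ is large, a Part (1)-style arc count on a pair $(A_i,A_j)$ with large product pushes the bipartite argument through to $n<2s(s+1)^2$, while if every $|A_i|$ is small a pigeonhole argument yields $|A|\leq 2k(s+1)$ and hence $n\leq 2ks(s+1)$. Taking the maximum gives the claim.

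\medskip
\noindent\textbf{Main obstacle.} The sharp inequality in Part (1) rests on noticing that $Y\setminus A_Y$ (not merely $A_Y$) must be witnessed by $A_X$; without this refinement one only gets the weak bound $ab\leq s(a+b)$, which places no finite upper bound on $T$ at all. For Part (2) the difficulty is identifying which pair $(A_i,A_j)$ to count against and choosing the correct threshold that cleanly splits the small-$|A_i|$ and large-$|A_i|$ regimes.
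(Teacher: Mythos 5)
Your Part (1) is correct and is essentially the paper's argument: the same witnessing step (every vertex receives an arc from some out-degree-$s$ vertex in the opposite part) and the same key inequality $n\le(s+1)(a+b)-ab$, obtained by discounting the $ab$ arcs that the witness set spends on itself. You then optimize differently --- the paper uses $b\le 2s$, $a\le sb$ and pushes to the boundary $a=sb$, while you bound $ab\ge sT^2/(s+1)^2$ from $a,b\in[T/(s+1),sT/(s+1)]$ and maximize over $T=a+b$; both give $(s+1)^4/(4s)$, and your version is if anything more carefully justified than the paper's ``by monotonicity analysis.''

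Part (2), however, is a plan rather than a proof, and you say as much in your closing paragraph. Two things are missing. First, the dichotomy you propose (``some $|A_i|$ large'' versus ``all $|A_i|$ small'') is not pinned down: no threshold is given, the pigeonhole step producing $|A|\le 2k(s+1)$ is not explained, and it is not clear that an arc count against a single pair $(A_i,A_j)$ suffices in the large case. The paper's actual dichotomy is different and is the key idea you are missing: order $a_1\ge a_2\ge\cdots\ge a_k$ and split on whether $a_1\le\sum_{i\ge2}a_i$ or $a_1>\sum_{i\ge2}a_i$. The basic inequality $s\sum_i a_i\ \ge\ \sum_{i<j}a_ia_j\ \ge\ a_1\sum_{i\ge 2}a_i$ then does all the work: in the first case $s\sum_i a_i\le 2s\sum_{i\ge2}a_i$ forces $a_1\le 2s$, hence every $a_i\le 2s$, $|A|\le 2ks$ and $n\le 2ks(s+1)$; in the second case $s\sum_i a_i\le 2sa_1$ forces $\sum_{i\ge2}a_i\le 2s$, and then the witnessing bound applied to the part $U_1$ (whose witnesses must lie in $\bigcup_{i\ge2}A_i$) gives $a_1\le|U_1|\le s\sum_{i\ge2}a_i\le 2s^2$, so $|A|\le 2s(s+1)$ and $n\le 2s(s+1)^2$. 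So the comparison is between $A_1$ and the union of all the other $A_i$, not between two individual parts, and the ``threshold'' falls out of the inequality rather than being chosen in advance. Until this case analysis is carried out, Part (2) is not established.
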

\begin{proof}
  Let $D=(U,W)$ be an $s$-minimal bipartite tournament on $n$ vertices. It follows that for any vertex $v\in V(D)$, there is an arc $uv$ with $d_D^+(u)=s$. Define $a=|\{v\in U: d_D^+(v)=s\}|$ and $b=|\{v\in W: d_D^+(v)=s\}|$ and without loss of generality let $a\geq b$. By the fact above, we have $$s(a+b)-ab\geq n-(a+b)\geq0.$$
  So $b\leq2s$. Thus we have $n\leq\max\{ g(a,b)=(s+1)(a+b)-ab: b\leq2s, a\leq sb\}$. By monotonicity analysis, the optimal solution $(x,y)$ satisfies $x=sy$, and it follows that $n\leq\max\{ g(sy,y)=(s+1)^2y-sy^2: y\leq2s\}\leq\frac{(s+1)^4}{4s}$.

  For an $s$-minimal $k$-partite tournament $D=(U_1,U_2,\ldots,U_k)$, we denote $a_i=|\{v\in U_i: d_D^+(v)=s\}|$ for each $i\in \{1,2,\ldots,k\}$ and assume that $a_1\geq a_2\geq\ldots\geq a_k$. Similarly, we have
  \begin{equation}\label{eq1}
    s\sum\limits_{i=1}^ka_i-\sum\limits_{1\leq i<j\leq k}a_ia_j\geq n-\sum\limits_{i=1}^ka_i.\nonumber
  \end{equation}
  If $a_1\leq\sum\limits_{i=2}^ka_i$, then $2s\sum\limits_{i=2}^ka_i-a_1\sum\limits_{i=2}^ka_i>0$ by the above inequality. Thus we have $a_1<2s$ and $n<2ks(s+1)$. If $a_1>\sum\limits_{i=2}^ka_i$, then $2sa_1-a_1\sum\limits_{i=2}^ka_i>0$ and it follows that $\sum\limits_{i=2}^ka_i<2s$. By the fact that for any vertex $v\in V(D)$, there is an arc $uv$ with $d_D^+(u)=s$, we have $a_1<s\sum\limits_{i=2}^ka_i$ and $n<2s(s+1)^2$. So $n<\max\{2s(s+1)^2, 2ks(s+1)\}$.
\end{proof}
Lemma \ref{lem2} implies Theorem \ref{thm3} and Corollary \ref{cor3} directly.
\end{proof}


%


\section{Remark}
We want to mention that Alon et al. \cite{alon2} obtained a similar result regarding Theorem \ref{th1}, and their work was available on arXiv just before we submit our manuscript. The results in two papers are finished independently.






\section{Acknowledgements}
We are very grateful to the reviewers for their useful comments. This work was supported by NSFC (Nos. 11601430, 11631014, 11471193), the Foundation for Distinguished Young Scholars of Shandong Province (JQ201501), and China Postdoctoral Science Foundation (No. 2016M590969).

\end{document}